\newcolumntype{C}[1]{>{\centering\arraybackslash}m{#1}}
\numberwithin{equation}{section}
\newtheorem{thm}[equation]{Theorem} 
\newtheorem{lem}[equation]{Lemma} 
\newtheoremstyle{named}{}{}{\itshape}{}{\bfseries}{.}{.5em}{#3}
\theoremstyle{named} 
\theoremstyle{remark}
\theoremstyle{definition}
\numberwithin{table}{section}
\newcommand{\QQ}{\mathbf Q}
\newcommand{\ZZ}{\mathbf{Z}}
\newcommand{\RR}{\mathbf R}
\newcommand{\FF}{\mathbf F}
\begin{document}
\title{Greenberg's conjecture for real quadratic number fields}

\author{Pietro Mercuri}
\email{mercuri.ptr@gmail.com}
\address{Università di Trento, Dipartimento di matematica, Trento 38122, Italy}

\author{Maurizio Paoluzi}
\email{mauriziopaoluzi@gmail.com}
\address{Via Mariano Rampolla 24, Rome 00168, Italy}

\author{René Schoof}
\email{schoof.rene@gmail.com}
\address{Università di Roma ``Tor Vergata'', Dipartimento di matematica, Rome 00133, Italy}

\subjclass[2020]{11R23,11R11,11R27,11R29}
\keywords{Iwasawa theory, Greenberg's conjecture, real number fields, algebraic number theory}

\begin{abstract}
We compute the $3$-class groups $A_n$ of the fields $F_n$ in the cyclotomic $\ZZ_3$-extensions of the real quadratic fields of discriminant $f<100,000$. In all cases the orders of $A_n$ remain bounded as $n$ goes to infinity. This is in agreement with Greenberg's conjecture. 
\end{abstract}

\maketitle

\section{Introduction}

Let $F$ be a totally real number field and let $p$ be a prime. Let
$$
F=F_0\,\,\subset\,\, F_1\,\,\subset\,\, F_2\,\,\subset\,\, \ldots
$$
denote the cyclotomic $\ZZ_p$-extension  of~$F$. The $p$-class group $A_n$ is the $p$-part of the ideal class group of the ring of integers of $F_n$. In his 1971 thesis
Ralph Greenberg conjectured that  $\#A_n$ remains bounded as $n\rightarrow\infty$.  See \cite{GBT,GBA} and~\cite[Conjecture (3.4)]{GBI}. 
This is the so-called ``$\lambda=0$"-conjecture  of Iwasawa theory. In this note we report on
a computation for the prime $p=3$ involving the $30394$ real quadratic fields $\QQ(\sqrt{f})$ of discriminant $f<100,000$. See~\cite{FK,IS,KS} for earlier computations.
As a consequence we obtain the following result.

\begin{thm}\label{thm:greenberg}
Greenberg's conjecture is true for $p=3$ and the real quadratic fields of discriminant~$f<100,000$.
\end{thm}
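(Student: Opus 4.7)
The plan is to verify the conjecture one field at a time for the $30394$ real quadratic fields $F=\QQ(\sqrt f)$ with $f<100{,}000$. By the Ferrero--Washington theorem, the Iwasawa $\mu$-invariant vanishes for every abelian extension of $\QQ$, so it suffices to prove $\lambda=0$ for the cyclotomic $\ZZ_3$-extension $F_\infty/F$; equivalently, the sequence $\#A_n$ must be bounded. The computational criterion that makes this tractable is Fukuda's theorem: if $n_0$ is any level at which every prime of $F_{n_0}$ above $3$ is totally ramified in $F_\infty/F_{n_0}$, and if $\#A_{n+1}=\#A_n$ for some $n\geq n_0$, then $\#A_m=\#A_n$ for every $m\geq n$, whence $\lambda=0$.

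For real quadratic $F$ one checks directly that $n_0\in\{0,1\}$ according to the splitting of $3$ in $F$. The strategy is then to compute the $3$-parts $A_0, A_1, A_2, \ldots$ of the class groups of successive layers $F_n$ and stop as soon as two consecutive orders agree. To make each step feasible one exploits the action of $\Gal(F_n/F)\cong\ZZ/3^n\ZZ$ on $A_n$, the norm maps $A_{n+1}\to A_n$ and their kernels, the genus-theoretic count of ambiguous classes in $F_n/F_{n-1}$, and Chevalley's formula for $\#A_n^{\Gal(F_n/F_{n-1})}$ in terms of units and ramification data. Together these tools pin down $\#A_n$ without a brute-force ideal-class computation in a number field of degree $2\cdot 3^n$.

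The main obstacle is the small set of fields for which $3\mid\#A_0$ and stabilization occurs only at a late level: then $n$ has to be pushed up far enough that $[F_n:\QQ]$ is genuinely large, and even with the descent tricks above a substantial computation in $F_n$ remains. For such stubborn cases we would fall back on a more structural analysis of the Iwasawa module $X=\varprojlim_n A_n$, accessed through cyclotomic units and the general theory of Iwasawa modules over totally real fields, which lets one bound $\lambda$ without ever realizing $A_n$ as an explicit abelian group.
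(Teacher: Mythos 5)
Your overall framework is sound as far as it goes: Ferrero--Washington gives $\mu=0$, Fukuda's stabilization criterion is a correct way to conclude $\lambda=0$ from $\#A_{n+1}=\#A_n$, and for real quadratic $F$ and $p=3$ the total-ramification hypothesis is indeed satisfied from level $0$ or $1$. This is essentially the route of the earlier computations (Fukuda--Komatsu, Ichimura--Sumida, Taya) cited in the paper, and the paper itself uses exactly this kind of class-number check (via PARI/GP on $F_0$ and $F_1$) for the easy entries. But your plan has a genuine gap precisely where the difficulty lies. The data show stabilization levels as high as $n=6$ and $n=7$ (e.g.\ $f=39256$, $78037$), so your criterion would require certifying $\#A_{n+1}=\#A_n$ in fields of degree $2\cdot 3^7$ and $2\cdot 3^8$; genus theory and Chevalley's ambiguous-class formula control $A_n^{\Gal(F_n/F_{n-1})}$, not $\#A_n$ itself, and do not reduce this to a feasible computation. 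Your fallback for the ``stubborn'' cases --- ``a more structural analysis of $X=\varprojlim A_n$ accessed through cyclotomic units'' --- is not an argument; it is a placeholder for the entire content of the paper.

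What is actually needed, and what the paper supplies, is threefold. First, Sinnott's index formula $\#A_n=\#B_n$ replaces class groups by cyclotomic-unit quotients, whose Pontryagin duals assemble into a \emph{cyclic} $\Lambda$-module $C(f)=\Lambda/J$ with $C_n=\Lambda/(J+(\omega_n))$. Second, the two bounds on $C_n$ have very different costs: elements of $J+(\omega_n)$ (an upper bound for $C_n$) are cheap to produce for large $n$ via reductions of cyclotomic units modulo auxiliary primes $r\equiv 1\pmod{f'3^{n'}}$, whereas the matching lower bound (the Gras method, with high-precision archimedean computations) is only feasible for $n\le 2$. Third --- and this is the key point your proposal lacks --- Lemma 2.2 of the paper shows that a rigorous \emph{lower} bound $\#M/\omega_mM\ge p^a$ at a small level $m$ together with a rigorous \emph{upper} bound $\#M/\omega_nM\le p^b$ at a large level $n$ already forces $\omega_nM=0$, hence finiteness, whenever $b-a<n-m$. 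This circumvents the need ever to determine $\#A_n$ exactly at high levels, which is exactly the step at which your plan, like a naive Fukuda-style computation, breaks down.
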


\medskip\noindent
 For each of the real quadratic fields with discriminant $f$ in the range of our compuation we have computed a certain Galois module $C(f)$, the finiteness of which is equivalent to  Greenberg's conjecture. In this introduction we define
 the module $C(f)$.    In the subsequent sections   we explain our computation and  its results.  
 See \cite{KS} for the algebraic properties of $C(f)$  in the case $f\not\equiv 1\pmod 3$. The slightly different case $f\equiv 1\pmod 3$ is discussed in detail in~\cite{PAO} and in \Cref{sec:1mod3} of this paper.
 
Let $F=\QQ(\sqrt{f})$ be a real quadratic field of discriminant $f$. The Galois module $C(f)$ is defined in terms of cyclotomic units as follows. For $k\ge 1$ let  $\zeta_k$ denote a primitive $k$-th root of unity. 
For  $n\ge 0$  the $n$-th layer in the cyclotomic $\ZZ_3$-extension  of $F$ is
$$
F_n=\QQ(\sqrt{f},\zeta_{3^{n+1}}+\zeta_{3^{n+1}}^{-1}).
$$
 The field $F_n$ is a  subfield of the cyclotomic field $\QQ(\zeta_{3^{n+1}f})$. It is a cyclic  degree $3^n$ extension of $F_0=\QQ(\sqrt{f})$. Its ring of integers $O_n$ contains cyclotomic units.  See~\cite[Section~4]{SINN}. The $3$-part of  the quotient of the unit group $O_n^*$ by the subgroup generated by the cyclotomic units is a finite group denoted by $B_n$.
It is known that the groups $A_n$ and $B_n$ have the same cardinality~\cite[Theorems~4.1 and 5.3]{SINN}.  Therefore
Greenberg's conjecture is true for the field $F$ if and only if $\#B_n$ remains bounded as $n\rightarrow\infty$.

When the discriminant $f$ is not congruent to $1\pmod 3$, we let $C_n$ denote the dual of the group $B_n$ for $n\ge 0$. When $f\equiv1\pmod 3$, we  let  $C_n$ denote the dual of the group $\tilde B_n$. Here $\tilde B_n$ sits in an exact sequence of the form
$$
0\longrightarrow \tilde B_n\longrightarrow B_n\mathop{\longrightarrow}\limits^{\phi_n} \ZZ_3/\log_3\eta_0\ZZ_3.
$$
where for $\epsilon\in O_n^*$ we put $\phi_n(\epsilon)={1\over{3^n}}\log_3(N_n(\epsilon))$. Here  $N_n\colon F_n^*\rightarrow\QQ(\sqrt{f})^*$ is the norm map. Since the $3$-adic logarithm of a generator $\eta_0$ of the group of  cyclotomic units in $\QQ(\sqrt{f})$ is not zero, the rightmost group is a finite cyclic group.
It follows that $[B_n:\tilde B_n]$ and hence the quotient $\#B_n/\#C_n$  is bounded independently of~$n$.
Therefore Greenberg's conjecture is true if and only if $\#C_n$ remains bounded as $n\rightarrow\infty$.
 
By \cite[Lemma 2.1]{KS} and \Cref{sec:1mod3}, the natural maps $B_m\rightarrow B_n$ are injective and the natural maps $C_n\rightarrow C_m$ are surjective for  $n\ge m$. Let  $C(f)$  denote the projective limit of the $C_n$. Then $C(f)$ is a Galois module and hence  in the usual way a module over
the Iwasawa algebra~$\Lambda\cong\ZZ_3[[T]]$.
It follows from the structure  of  the cyclotomic units that $C(f)$   is a cyclic $\Lambda$-module. See~\cite[Theorem~2.4]{KS} and \Cref{sec:1mod3}.  In other words, we have
$$
C(f)\,=\,\mathop{\rm lim}\limits_{\leftarrow} C_n\,\,\cong \Lambda/J,\qquad\text{for some ideal $J\subset\Lambda$.}
$$
The vanishing of the Iwasawa $\mu$-invariant of $\QQ(\sqrt{f})$ means that $J$ contains a monic polynomial and hence that  $C(f)$ is a finitely generated $\ZZ_3$-module. See~\cite{MU}. Greenberg's conjecture affirms that   $C(f)$ is actually \emph{finite}, so that $C_n=C(f)$ for sufficiently large~$n$.
\smallskip\smallskip

We have computed the Galois modules $C(f)$ for $f<100,000$. It took about two weeks on a workstation with Intel processor i5.
We found the following, which  is equivalent to \Cref{thm:greenberg}.

\begin{thm}\label{thm:Cfin}
For $p=3$ and for all discriminants $f<100,000$ the module $C(f)$ is finite.
\end{thm}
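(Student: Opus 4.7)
The plan is to verify finiteness of $C(f)$ directly, one discriminant at a time. Since there are $30{,}394$ discriminants in range, the strategy must combine an algebraic stopping criterion (to certify that only finitely many layers need to be inspected) with efficient enough linear algebra (to handle those layers).

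First I would compute the finite layers $C_n$ explicitly. Following Sinnott's construction \cite{SINN}, one writes down generators of the cyclotomic unit subgroup of $O_n^*$ as products of $\zeta_{3^{n+1}f}^a-1$ and descends them to $F_n$. Comparing these against a chosen $\ZZ$-basis of $O_n^*$ via a matrix of $3$-adic logarithms, computed to sufficient precision and reduced modulo a large power of $3$, yields a presentation of $B_n$---or of $\tilde B_n$ when $f\equiv1\pmod 3$---as a $\ZZ_3[\Gal(F_n/F_0)]$-module. Its Smith normal form gives the group structure, and dualising recovers $C_n$ with its $\Lambda$-action.

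The key stopping criterion is the following. By Pontryagin duality, $C(f)=\mathop{\mathrm{lim}}\limits_{\leftarrow} C_n$ is the dual of $B=\mathop{\mathrm{lim}}\limits_{\rightarrow} B_n$, so that $C_n=C(f)/\omega_n C(f)$ for $\omega_n=(1+T)^{3^n}-1$. Setting $\nu_n=\omega_{n+1}/\omega_n\in\Lambda$, the kernel of the surjection $C_{n+1}\twoheadrightarrow C_n$ is $\omega_n C(f)/\nu_n\omega_n C(f)$. Since $\nu_n(0)=3$, the element $\nu_n$ lies in the Jacobson radical $(3,T)$ of $\Lambda$. Applying Nakayama's lemma to the finitely generated $\Lambda$-module $\omega_n C(f)$, the equality $\#C_n=\#C_{n+1}$ forces $\omega_n C(f)=0$, hence $C(f)=C_n$ is finite. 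Thus for each $f$ it suffices to compute successive $C_n$ until two consecutive layers have the same order.

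The main obstacle is computational efficiency rather than anything conceptual. The relation matrix has size growing like $3^n\times 3^n$ and the required $3$-adic precision grows with $n$ and with the (a priori unknown) Iwasawa invariants of $f$, so a handful of exceptional discriminants---typically those with large $\lambda$---will dominate the total running time. Good heuristics for predicting how many layers will be needed for each $f$, for instance based on the initial sequence $\#C_0,\#C_1,\ldots$, together with careful management of precision and reuse of data between layers, are essential if the whole range $f<100{,}000$ is to fit into the two-week budget reported. Once stabilization $\#C_n=\#C_{n+1}$ is detected for every $f$ in range, \Cref{thm:Cfin} follows.
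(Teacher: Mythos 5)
Your overall architecture --- compute the layers $C_n$ one at a time and stop as soon as $\#C_n=\#C_{n+1}$, which by Nakayama forces $\omega_nC(f)=0$ --- is exactly the paper's strategy, and your Nakayama argument via $\nu_n=\omega_{n+1}/\omega_n\in(3,T)$ is correct. The gap lies in the step you dismiss as ``computational efficiency rather than anything conceptual.'' Your stopping criterion needs the \emph{exact} orders of $C_n$ and $C_{n+1}$ at the level of stabilization, and the lower bound $\#C_n\ge\cdots$ amounts to pinning down the index of the cyclotomic units inside the full unit group of $F_n$, a field of degree $2\cdot3^n$ over a conductor with up to seven digits. Many discriminants in the range stabilize only at $n=3,\dots,7$, which would require unit-group information in fields of degree up to $2\cdot3^{8}$; no amount of precision management or heuristics makes this feasible. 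The paper says so explicitly: the rigorous lower bound (obtained by the Gras--Gras method, not by computing a $\ZZ$-basis of $O_n^*$, which would be harder still) is already out of reach for most $f$ once $n$ exceeds~$2$. So as written your procedure cannot terminate with a proof for the hard discriminants.

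The missing idea is \Cref{lem:finiteness}: if $\#M/\omega_mM\ge p^a$ and $\#M/\omega_nM\le p^b$ with $b-a<n-m$, then one of the $n-m$ inclusions in the filtration $\omega_nM\subset\cdots\subset\omega_mM$ must be an equality, and Nakayama then gives $\omega_kM=0$ for some $k$, hence $\omega_nM=0$ and $M$ is finite. This decouples the rigorous lower bound, which is computed only at a small level ($m=2$ suffices throughout the range), from the upper bound, which comes cheaply from elements of $J+(\omega_n)$ produced by reducing cyclotomic units modulo suitable auxiliary primes and is available for large $n$. Without this lemma, or some substitute playing the same role, the layer-by-layer computation you describe does not close, and \Cref{thm:Cfin} does not follow.
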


\noindent
In most cases the module  $C(f)$ is actually zero.
Indeed,  for only $3359$ out of the $30394$ real quadratic fields considered, $C(f)$ is not zero and, equivalently, $J$ is a proper $\Lambda$-ideal.
This is about $11$\% of all cases. Of these, $2118$ have $J$ equal to the maximal ideal $(3,T)$ of $\Lambda$.
In these cases $C(f)$ has order~$3$.
For the remaining $1241$ discriminants the module $C(f)$ is strictly larger. This is  approximately $4$\% of all cases.

Rather than listing each  ideal~$J$, we indicate in \Cref{sec:not1mod3data,sec:1mod3data} how often ideals of a certain type appear in our computation. The full list of ideals may be of interest in itself and  is available  on GitHub~\cite{GIT}.
In \Cref{sec:not1mod3data,sec:1mod3data} we also single out some discriminants for which the ideal $J$ has a remarkable shape.

\section{The case $f\nequiv 1\pmod 3$}\label{sec:not1mod3}

In this section we give a brief description of the algorithm in the case where the  discriminant $f$  is congruent to $0$ or $2$ modulo~$3$.  
This case is discussed in detail in~\cite{KS}. Let $F=\QQ(\sqrt{f})$ be a real quadratic field of discriminant~$f$.  
Put $f'=f/3$ when $f\equiv 0\pmod 3$ and $f'=f$ when $f\equiv 2\pmod 3$. For $n\ge 0$ the $n$-th layer  $F_n$ in the cyclotomic tower of $F$ is a subfield of
$\QQ(\zeta_{3^{n+1}f'})$. The cyclotomic unit $1-\zeta_{3^{n+1}f'}$ is contained in $\QQ(\zeta_{3^{n+1}f'})$. Put
$$
\eta_n=\mathop{\rm Norm}\limits_{\QQ(\zeta_{3^{n+1}f'})/F_n}(1-\zeta_{3^{n+1}f'})^{\sigma-1}.
$$
Here $\sigma$ is the non-trivial automorphism in ${\rm Gal}(F_n/\QQ_n)\cong {\rm Gal}(\QQ(\sqrt{f})/\QQ)$.

In \cite{KS} it is explained that the Galois module generated by $\eta_n$ is free of rank~$1$ over~$\ZZ[G_n]$. Here $G_n$ denotes~${\rm Gal}(F_n/F_0)$.
This  implies that the Galois module $C(f)$  described in the introduction is a \emph{cyclic} module over the Iwasawa algebra $\Lambda=\mathop{\rm lim}\limits_{\leftarrow}\ZZ_3[G_n]\cong\ZZ_3[[T]]$. 
So we have  $C(f)=\Lambda/J$ for some $\Lambda$-ideal~$J$.  
For $n\ge 0$ we put $\omega_n(T)=(1+T)^{p^n}-1$ and we write $(\omega_n)$ for the $\Lambda$-ideal generated by it.  In \cite{KS} it is explained that in this case  
we have 
$$
C_n=C(f)/\omega_nC(f)=\Lambda/(J+(\omega_n)),\qquad\text{ for all~$n\ge 0$.}
$$ 
The Galois module $C(f)$ is finite if and only if $\omega_nC(f)=0$ and hence $C(f)=C_n$ for some~$n\ge 0$. By Nakayama's lemma this happens  if and only if $J+(\omega_n)=J+(\omega_{n+1})$ for some~$n\ge 0$.
This observation leads to the following algorithm. For   $n=0,1,2,\ldots$, we compute the shrinking ideals $J+(\omega_n)$ 
until  we find that $J+(\omega_n)=J+(\omega_{n+1})$.

Our method for computing the ideals $J+(\omega_n)$ runs as follows. 
For a given $n$ we first calculate a lot of  elements in the ideal.
As is explained in~\cite{KS}, this involves  calculations with cyclotomic units modulo primes $r\equiv 1\pmod{f'3^{n'}}$ for suitable $n'>n$. This  leads to an \emph{upper bound} for 
$\Lambda/(J+(\omega_n))$. To obtain a \emph{lower bound} we employ a method due to G. and M.-N. Gras~\cite{GG}. This involves calculations with high precision approximations  of the cyclotomic units in $F_n\otimes\RR$. See also \cite[Section 4]{KS}. 
Clearly, when the upper and lower bounds agree, we have determined $J+(\omega_n)$ and hence $C_n=\Lambda/(J+(\omega_n))$. 

The calculation of the lower bound for $C_n$ becomes very time consuming and takes a lot of memory as $n$ grows. 
This is caused by the high precision computations with units in cyclotomic fields of seven digit conductors and  degrees in the hundreds.
 In fact, for most discriminants $f$ it becomes infeasible when $n$ exceeds~$2$. Fortunately, for most $f$ 
 we find that $J+(\omega_n)=J+(\omega_{n+1})$ and hence $C(f)=C_n$ for $n\le 2$.

In the rare cases where we need to consider $J+(\omega_n)$ for  $n\ge 3$, it is still feasible to compute 
the upper bound in the sense  that we can easily calculate  a lot of elements in the ideal~ $J+(\omega_n)$.
 An application of the Cebotarev density theorem suggests that these elements probably \emph{generate} 
 $J+(\omega_n)$, so that our upper bound is actually \emph{equal} to the lower bound, but we have no rigorous proof of this.

Fortunately, we can still  rigorously prove that $C(f)=\Lambda/J$ is finite and thus confirm Greenberg's conjecture
even when we cannot use our algorithm to compute  lower bounds for $\Lambda/(J+(\omega_n))$.
It suffices to have an upper bound for $n$ and a lower bound for \emph{some} $m\le n$ to which the following lemma applies.  In the range of our computations this always works out with~$n\ge m=2$.

\begin{lem}\label{lem:finiteness}
Let $M$ be a finitely generated $\Lambda$-module.
Suppose that  for certain  integers $n\ge m\ge 0$ and $b\ge a\ge 0$ we have
$$
\#M/\omega_mM\ge p^a\quad\hbox{and}\quad \#M/\omega_nM\le p^b.
$$
If $b-a<n-m$, then $\omega_n M = 0$. In particular, if $M/\omega_n M$ is finite, so is $M$.
\end{lem}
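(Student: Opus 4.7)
The plan is to exploit the chain of submodules
$$\omega_m M \,\supseteq\, \omega_{m+1} M \,\supseteq\, \ldots \,\supseteq\, \omega_n M,$$
together with a pigeonhole count on its successive quotients, and then to invoke Nakayama's lemma via the determinant trick.

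First I would note that in $\Lambda$ the polynomial $\omega_k$ divides $\omega_{k+1}$, with quotient
$$\nu_k \,:=\, \omega_{k+1}/\omega_k \,=\, 1 + (1+T)^{p^k} + (1+T)^{2p^k} + \ldots + (1+T)^{(p-1)p^k},$$
which justifies the inclusions displayed above. From the hypothesis,
$$\#\bigl(\omega_m M/\omega_n M\bigr) \,=\, \#(M/\omega_n M)\big/\#(M/\omega_m M) \,\leq\, p^{b-a}.$$
The chain has exactly $n-m$ successive steps, and the orders of their quotients multiply to at most $p^{b-a} < p^{n-m}$. So at least one step must collapse: there exists $k$ with $m \leq k < n$ such that $\omega_k M = \omega_{k+1} M$.

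Setting $N := \omega_k M$, this equality reads $N = \nu_k N$. The key arithmetic input is $\nu_k(0) = p$, which places $\nu_k$ in the maximal ideal $(p,T)$ of the local Noetherian ring $\Lambda$. Since $N$ is finitely generated (as a submodule of the Noetherian $\Lambda$-module $M$), the determinant trick applied to multiplication by $\nu_k$ on a finite generating set of $N$ produces an annihilator of the form $1 + \nu_k y$ for some $y \in \Lambda$, and this element lies in $1 + (p,T)$, hence is a unit in $\Lambda$. It follows that $N = \omega_k M = 0$, and since $\omega_n$ is a multiple of $\omega_k$ in $\Lambda$ we obtain $\omega_n M = 0$. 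The ``in particular'' clause is then immediate: $\omega_n M = 0$ means $M = M/\omega_n M$, so finiteness of the quotient forces finiteness of $M$.

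The essential step — and the only place beyond routine bookkeeping — is recognizing that $\nu_k(0) = p$, so that $\nu_k$ is in the maximal ideal of $\Lambda$ and Nakayama applies. Everything else is descending-chain pigeonhole or standard commutative algebra in the Iwasawa algebra.
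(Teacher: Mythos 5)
Your proof is correct and follows essentially the same route as the paper: the same filtration from $\omega_m M$ down to $\omega_n M$, the same pigeonhole count forcing $\omega_k M=\omega_{k+1}M$ for some $k$, and the same appeal to Nakayama's lemma using that $\omega_{k+1}/\omega_k$ lies in the maximal ideal of $\Lambda$. Your explicit verification that $\nu_k(0)=p$ just spells out a step the paper leaves implicit.
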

\begin{proof}
In the filtration 
$$
\omega_nM \quad\subset\quad \omega_{n-1}M \quad\subset\ldots\subset\quad \omega_{m+1}M \quad\subset\quad \omega_mM
$$
there are $n-m$ inclusions. We have inequalities
$$
\#(\omega_nM/\omega_mM)={{\#M/\omega_nM}\over{\#M/\omega_m M}}\le p^{b-a}<p^{n-m}.
$$
It follows that one of the inclusions must be an equality.
So we have $\omega_{k+1}M=\omega_{k}M$ for some $k=m,\ldots,n-1$. Then $x=\omega_{k+1}/\omega_k$ is an element of the maximal ideal of $\Lambda$ that has the property that $x\omega_kM=\omega_kM$.  Nakayama's lemma  implies then $\omega_kM=0$. It follows that $\omega_nM$ is zero, as required.
\end{proof}

\section{Numerical data for discriminants $f\nequiv 1\pmod 3$}\label{sec:not1mod3data}

\subsection*{Case $f\equiv 0\pmod 3$.}\hfill\smallskip\\
There are $7606$ real quadratic fields with discriminant $f\equiv 0\pmod 3$ and $f<100,000$.
For precisely $769$ of them the Galois module $C(f)=\Lambda/J$ is not zero. This is approximately~$10\%$.
For $513$ discriminants $J$ is equal to the maximal ideal $(3,T)$ of $\Lambda$. For the remaining $256$ discriminants 
 $J$ is strictly smaller. \Cref{table1} contains some data.

\begin{table}[h!]  
\renewcommand*{\arraystretch}{1.2}
\caption{The modules $\Lambda/J$ for $f\equiv 0\pmod 3$.}
\begin{center}
\begin{tabular}{r|r|r|r|r}
$n$\phantom{*}&$T\phantom{^2}$&$T^2$&$T^3$&Total \\
\hline
$0$\phantom{*}&\phantom{*}$536$\phantom{*}&$0$\phantom{*}&$0$\phantom{*}&$536$ \\
$1$\phantom{*}&$112$\phantom{*}&\phantom{*}\phantom{*}$50$\phantom{*}&\phantom{*}\phantom{*}\phantom{*}$2$\phantom{*}&$164$ \\
$2$\phantom{*}&$35$\phantom{*}&$7$\phantom{*}&$2$\phantom{*}&$44$ \\
$3$\phantom{*}&$15$*&$0$\phantom{*}&$0$\phantom{*}&$15$ \\
$4$\phantom{*}&$5$*&$1$*&$0$\phantom{*}&$6$ \\
$5$\phantom{*}&$2$*&$0$\phantom{*}&$0$\phantom{*}&$2$ \\
$6$\phantom{*}&$2$*&$0$\phantom{*}&$0$\phantom{*}&$2$ \\
\hline
&$707$\phantom{*}&$58$\phantom{*}&$4$\phantom{*}&$769$
\end{tabular}
\end{center}
\label{table1}
\end{table}

\noindent
The rows of \Cref{table1} correspond to the \emph{level of stabilization}~$n$.  This means that
$n$ is the smallest integer for which 
the ideals $J+(\omega_n)$ and $J+(\omega_{n+1})$ are equal and hence $J=J+(\omega_n)$. In particular, we have~$\Lambda/J=C(f)=C_n$.
The number $n$ is also the smallest for which $\omega_n=(1+T)^{3^n}-1$ is in~$J$.  Equivalently, $3^n$ is the order of $1+T$ in the multiplicative group $(\Lambda/J)^*$.

The columns are indexed by the symbols $T^k$ for $k=1,2,\ldots$. 
The entry in the $n$-th row and the $T^k$-column is the number of discriminants for which
the  level of stabilization is $n$,  and  the image of $J$ in the ring $\FF_3[[T]]$ is the ideal $(T^k)$.
Since $\omega_n$ is congruent to $T^{3^n}$ modulo $3$, the $(n, T^k)$-entry is zero whenever $k>3^n$.
In particular, in the row corresponding to $n=0$, all entries  with $k>1$ are zero.
 
In the first column  we count the discriminants for which the ideal $J$ is of the form $J=(T-a,b)$ for certain $a,b\in\ZZ$. 
For $536$ discriminants we have $a=0$ and there is stabilization at level $n=0$. 
This means that $\#C_0=\#C_1$ or, equivalently $\#A_0=\#A_1$.
The  discriminants for which $J$ is equal to the maximal ideal of $\Lambda$ are included  here.
This entry was checked by computing the class numbers of the fields $F_0$ and $F_1$ of degrees $2$ and $6$ respecively using a few lines of PARI/GP~\cite{PARI} code.
For the other entries in the first column, we have $a\not\in b\ZZ_3$ and stabilization occurs at level~$n= v_3(b/a)$.

An asterisk indicates that we do not have a rigorous lower bound for $C(f)$ for some of the discriminants appearing in this entry.  However, our upper bound is very likely to be sharp, so that almost certainly $C(f)$ is isomorphic to $\Lambda/J$.
In each case \Cref{lem:finiteness} was applied to prove Greenberg's conjecture. 
The  $62$ cases appearing in the second and third columns were dealt with using the polynomial arithmetic of Magma~\cite{MAG}.
We single out nine discriminants $f$ for special mention.

\begin{table}[h!]  
\renewcommand*{\arraystretch}{1.2}
\caption{Exotic Galois modules for $f\equiv 0\pmod 3$.}
\begin{center}
\begin{tabular}{c|l|r|r}
$f$&\multicolumn{1}{c|}{$J$}&$n$&$T^k$ \\
\hline
$31989$&$(T-996,2187)$&$6$&$T\phantom{^2}$\\
$38424$&$(T+261,2187)$&$5$&$T\phantom{^2}$\\
$59061$&$(T^2+3T-9,81)$&$4$&$T^2$\\
$60513$&$(T^3+3,3T,9)$&$2$&$T^3$\\
$61629$&$(T^3,3)$&$1$&$T^3$\\
$69117$&$(T+69,729)$&$5$&$T\phantom{^2}$\\
$71049$&$(T^3,3)$&$1$&$T^3$\\
$76584$&$(T^3+3,3T,9)$&$2$&$T^3$\\
$95385$&$(T-2988,6561)$&$6$&$T\phantom{^2}$
\end{tabular}
\end{center}
\label{table2}
\end{table}

\subsection*{Case $f\equiv 2\pmod 3$.}\hfill\smallskip\\
There are $11394$ real quadratic fields with discriminant $f\equiv 2\pmod 3$ and $f<100,000$.
For precisely $1250$ of them the Galois module $C(f)=\Lambda/J$ is not zero. This is
approximately $11\%$ of all discriminants.
For $781$ discriminants $J$ is equal to the maximal ideal $(3,T)$ of $\Lambda$. For the remaining $469$ discriminants 
 $J$ is strictly smaller.  This is about $4\%$ of all cases. \Cref{table3} contains some data.

\begin{table}[h!]  
\renewcommand*{\arraystretch}{1.2}
\caption{The modules $\Lambda/J$ for $f\equiv2\pmod 3$.}
\begin{center}
\begin{tabular}{r|r|r|r|r|r}
$n$\phantom{*}&$T\phantom{^2}$&$T^2$&$T^3$&$T^4$&Total \\
\hline
$0$\phantom{*}&$827$\phantom{*}&$0$\phantom{*}&\phantom{*}\phantom{*}$0$\phantom{*}&\phantom{*}\phantom{*}$0$\phantom{*}&$827$\\
$1$\phantom{*}&$158$\phantom{*}&\phantom{*}$87$\phantom{*}&$8$\phantom{*}&$0$\phantom{*}&$253$\\
$2$\phantom{*}&$101$\phantom{*}&$7$\phantom{*}&$4$\phantom{*}&$1$\phantom{*}&$113$\\
$3$\phantom{*}&$36$*&$2$*&$0$\phantom{*}&$0$\phantom{*}&$38$\\
$4$\phantom{*}&$13$*&$1$*&$0$\phantom{*}&$0$\phantom{*}&$14$\\
$5$\phantom{*}&$4$*&$0$\phantom{*}&$0$\phantom{*}&$0$\phantom{*}&$4$\\
$6$\phantom{*}&$1$*&$0$\phantom{*}&$0$\phantom{*}&$0$\phantom{*}&$1$\\
\hline
&\phantom{*}$1140$\phantom{*}&$97$\phantom{*}&$12$\phantom{*}&$1$\phantom{*}&$1250$
\end{tabular}
\end{center}
\label{table3}
\end{table}

\noindent
The interpretation of the entries of the table is the same as  in the case $f\equiv 0\pmod 3$.
The $781$ discriminants with $J=(3,T)$ are included in the entry with $n=0$  of the first column.
Also in this case the discriminants in the first column were taken checked using 
a few lines of PARI/GP code.
The other $110$ cases were dealt with using the polynomial arithmetic of Magma.
We single out nine discriminants for special mention.

\begin{table}[h!]  
\renewcommand*{\arraystretch}{1.2}
\caption{Exotic Galois modules for $f\equiv2\pmod 3$.}
\begin{center}
\begin{tabular}{c|l|r|r}
$f$&\multicolumn{1}{c|}{$J$}&$n$&$T^k$ \\
\hline
$14165$&$(T-255,729)$&$5$&$T\phantom{^2}$\\
$16673$&$(T+462,2187)$&$6$&$T\phantom{^2}$\\
$29165$&$(T-282,729)$&$5$&$T\phantom{^2}$\\
$47633$&$(T^2-9,3T-90,243)$&$4$&$T^2$\\
$51809$&$(T^2+18,3T-18,81)$&$3$&$T^2$\\
$71921$&$(T^2+18,3T+18,81)$&$3$&$T^2$\\
$76604$&$(T+294,729)$&$5$&$T\phantom{^2}$\\
$90005$&$(T+15,729)$&$5$&$T\phantom{^2}$\\
$98105$&$(T^4+3,3T,9)$&$2$&$T^4$
\end{tabular}
\end{center}
\label{table4}
\end{table}

\section{The case $f\equiv 1\pmod 3$.}\label{sec:1mod3}

As before we write $F=\QQ(\sqrt{f})$ and $F_n$ for  the $n$-th layer in the cyclomic $\ZZ_3$-extension of $F=F_0$.
When the discriminant $f$ is congruent to $1\pmod 3$, our method to compute the Galois module $C(f)$ is the same, but the details are slightly different.  See~\cite{PAO}.
The differences are caused by the fact that the Galois module generated by the cyclotomic unit $\eta_n$ is \emph{not free}
over the ring $\ZZ[G_n]$ when $f\equiv 1\pmod 3$. Here $\eta_n$ is defined in \Cref{sec:not1mod3} and $G_n$ denotes~${\rm Gal}(F_n/F_0)$.
Indeed, in this case we have $N_n\eta_n=1$, where $N_n$ is the norm map $F_n^*\rightarrow F^*$. 
When $f\equiv 1\pmod 3$, the 
Galois module ${\rm Cyc}_n$ of cyclotomic units in $F_n$ on which $\sigma$ acts as $-1$,  is a direct product of the  submodules generated by $\eta_n$ and $\eta_0$.
Here $\eta_0$ is the cyclotomic unit in $F_0$. It generates a group isomorphic to $\ZZ$ with trivial Galois action. On the other
hand, the Galois module $\langle\eta_n\rangle$ generated by $\eta_n$ is free of rank~$1$ over the ring~$\ZZ[G_n]/(N_n)$. See~\cite{SINN}.

The  submodule $\tilde{B}_n$ of $B_n$ that was defined in the introduction, is isomorphic to $O_{n,1}^*/\langle\eta_n\rangle$.
 Here $O_{n,1}^*$ denotes the part of the kernel of the norm map $N_n:O_n^*\rightarrow O_0^*$ on which $\sigma$ acts as $-1$.
The Galois modules  $O_{n,1}^*$, $\langle\eta_n\rangle$ and $\tilde{B}_n$ are killed by the norm $N_n$ and are hence $\ZZ[G_n]/(N_n)$-modules.
Since $\langle\eta_n\rangle$ is free of rank~$1$,  it is more convenient to deal with  $\tilde{B}_n$ rather than with  $B_n$ itself.  For instance, from the exactness of
 the sequence of $\ZZ[G_n]/(N_n)$-modules 
 $$
 0\longrightarrow  \langle\eta_n\rangle\longrightarrow O_{n,1}^*\longrightarrow \tilde{B}_n\longrightarrow 0
 $$
one deduces that the natural map $\tilde{B}_m\rightarrow \tilde{B}_n$ identifies $\tilde{B}_m$ with the kernel of the endomorphism $\omega_m'$ of $\tilde{B}_n$
 for $m\le n$. Here we put $\omega'_m=\omega_m/T$.  In particular, we have $\omega_0'=1$ and $C_0=0$.
  It follows that the Galois module 
 $C(f)$ is isomorphic to $\Lambda/J$ for some ideal $J$ and $C_n=C(f)/\omega'_nC(f)=\Lambda/(J+(\omega'_n))$ for all~$n\ge 0$.  

Our strategy is the one explained in \Cref{sec:not1mod3}: for each $n=1,2,\ldots,$ we compute the shrinking ideals $J+(\omega'_n)$ until  we find $J+(\omega'_n)=J+(\omega'_{n+1})$, in which case Nakayama's lemma implies that $J=J+(\omega'_n)$ and hence $C(f)=C_n$ and we are done.
When $f\equiv 1\pmod 3$ the issues with upper bounds and lower bounds are similar to the ones described in \Cref{sec:not1mod3} for $f\not\equiv 1\pmod 3$.
In particular, we can still prove  that $C(f)=\Lambda/J$ is finite in each case in the range of our computations. When the lower bound is not available for some $n\ge 3$, we invoke \Cref{lem:finiteness} with $\omega_m$ and $\omega_n$ replaced by $\omega_m'$ and $\omega_n'$ respectively.
 
 \medskip
 It is not relevant for our algorithm and computations, but in the rest of this section we analyze the cokernel of the inclusion map $\tilde{B}_n\hookrightarrow B_n$. 
 For $n\ge 0$, let $U_n$ denote the part of the unit group $(O_n\otimes\ZZ_3)^*$ on which $\sigma$  acts as~$-1$.
Since $\tilde{B}_n$ is the kernel of the map ${B}_n\longrightarrow U_0/\langle\eta_0\rangle$
induced  by $\epsilon\mapsto {\root{p^n}\of{N_n\epsilon}}$ for $\epsilon\in O_n^*$,  the quotient 
$B_n/\tilde{B}_n$  is isomorphic to a subgroup of the cyclic group $U_0/\langle\eta_0\rangle$ and is hence bounded 
independently of $n$. This can be made more precise.

The group  $N_nU_n$ is equal to the subgroup $U_0^{p^n}$ of $U_0$. It follows that $N_nO_n^*$ is contained in $U_0^{p^n}$.
Put
$$
{\root{p^n}\of {N_nO_n^*}}=\{u\in U_0:u^{p^n}\in N_nO_n^*\}.
$$
For every $n\ge 0$ we have inclusions
$$
N_n{O_n^*}^p=N_{n+1}O_n^*\subset N_{n+1}O_{n+1}^*\subset N_nO_n^*.
$$
It follows that we have a filtration
$$
O_0^*\subset\ldots \subset {\root{p^n}\of {N_nO_n^*}}\subset {\root{p^{n+1}}\of {N_{n+1}O_{n+1}^*}}\subset\ldots\subset U_0,
$$
with successive subquotiens of order~$1$ or $p$.  The fact that  $N_n{\rm Cyc}_n$ is equal to $\langle\eta_0^{p^n}\rangle$ gives rise to  the isomorphisms
$$
B_n/\tilde{B}_n\cong {N_nO_n^*}/\langle\eta_0^{p^n}\rangle \cong  {\root{p^n}\of {N_nO_n^*}}/\langle\eta_0\rangle.
$$
This leads to the following filtration
$$
O_0^*/\langle\eta_0\rangle\subset\ldots \subset B_n/\tilde{B}_n\subset B_{n+1}/\tilde{B}_{n+1}\subset\ldots\subset U_0/\langle\eta_0\rangle.
$$
with successive subquotiens of order~$1$ or $p$. 
The leftmost group is cyclic of order $h_0=\#A_0$ and the rightmost group has order $\log_3\eta_0$.  
Writing  $\epsilon_0$ for  a fundamental unit of $F=\QQ(\sqrt{f})$, there are $v_3\log_3\epsilon_0$ distinct steps in this filtration.
By Nuccio~\cite{NU}
we have $ B_n/\tilde{B}_n=U_0/\langle\eta_0\rangle$ when $n$ is sufficiently large.

\section{Numerical data for  discriminants $f\equiv 1\pmod 3$.}\label{sec:1mod3data}

There are $11394$ real quadratic fields with discriminant $f\equiv 1\pmod 3$ and $f<100, 000$.
For precisely $1340$ of them the  module  $C(f)$ is not zero. This is approximately $12\%$ of all discriminants.
For $824$ discriminants $J$ is equal to the maximal ideal $(3,T)$ of $\Lambda$. For the remaining $516$ discriminants  the ideal  $J$ is strictly smaller. 
This is $4.5\%$ of all cases.

The mathematics is a bit different in this case. First of all, the groups $A_0$, $B_0$ are irrelevant
for our computations and we have $C_0=0$. In addition, every module $C_n$ is a cyclic module over the ring $\Lambda/(\omega_n)$ that is killed by $\omega_n'$.
In particular, $C_1$ is a cyclic module over the discrete valuation ring $\Lambda/(\omega'_1)$, where $\omega'_1=\omega_1/T=T^2+3T+3$.  Since $T$ is a uniformizer of the ring $\Lambda/(\omega'_1)$, the module  $C_1$ is isomorphic to $\Lambda/(T^2+3T+3,T^k)$ for some~$k\ge 0$.

\begin{table}[H]  
\renewcommand*{\arraystretch}{1.2}
\caption{The modules $\Lambda/J$ for $f\equiv1\pmod 3$.}
\begin{center}
\begin{tabular}{r|r|r|r|r|r|r}
$n$\phantom{*}&$T\phantom{^2}$&$T^2$&$T^3$&$T^4$&$T^5$&Total \\
\hline
$1$\phantom{*}&$824$\phantom{*}&$79$\phantom{*}&$0$\phantom{*}&$0$\phantom{*}&$0$\phantom{*}&$903$\\
$2$\phantom{*}&$249$\phantom{*}&$18$\phantom{*}&$8$\phantom{*}&$1$\phantom{*}&$0$\phantom{*}&$276$\\
$3$\phantom{*}&$88$\phantom{*}&$7$\phantom{*}&$1$\phantom{*}&$0$\phantom{*}&$1$\phantom{*}&$97$\\
$4$\phantom{*}&$47$*&$3$*&$0$\phantom{*}&$0$\phantom{*}&$0$\phantom{*}&$50$\\
$5$\phantom{*}&$9$*&$0$\phantom{*}&$1$*&$0$\phantom{*}&$0$\phantom{*}&$10$\\
$6$\phantom{*}&$2$*&$0$\phantom{*}&$0$\phantom{*}&$0$\phantom{*}&$0$\phantom{*}&$2$\\
$7$\phantom{*}&$2$*&$0$\phantom{*}&$0$\phantom{*}&$0$\phantom{*}&$0$\phantom{*}&$2$\\
\hline
&\phantom{*}$1221$\phantom{*}&\phantom{*}$107$\phantom{*}&\phantom{*}$10$\phantom{*}&\phantom{*}$1$\phantom{*}&\phantom{*}$1$\phantom{*}&$1340$
\end{tabular}
\end{center}
\label{table5}
\end{table}

We single out eleven discriminants for special mention.

\begin{table}[h!]  
\renewcommand*{\arraystretch}{1.2}
\caption{Exotic Galois modules for $f\equiv1\pmod 3$.}
\begin{center}
\begin{tabular}{c|l|r|r}
$f$&\multicolumn{1}{c|}{$J$}&$n$&$T^k$ \\
\hline
$15217$&$(T^4+3,3T,9)$&$2$&$T^4$\\
$30904$&$(T^3-27,3T-63,243)$&$5$&$T^3$\\
$39256$&$(T+621,2187)$&$7$&$T\phantom{^2}$\\
$40441$&$(T^2,9T-27,81)$&$4$&$T^2$\\
$44053$&$(T+348,729))$&$6$&$T\phantom{^2}$\\
$57832$&$(T^2+27,3T-27,81)$&$4$&$T^2$\\
$71821$&$(T^3+18,3T+9,27)$&$3$&$T^3$\\
$78037$&$(T-849,2187)$&$7$&$T\phantom{^2}$\\
$80056$&$(T^5+9T+9,3T^2+18,27)$&$3$&$T^5$\\
$81769$&$(T^2+18,3T+9,81)$&$4$&$T^2$\\
$96712$&$(T-30,729)$&$6$&$T\phantom{^2}$
\end{tabular}
\end{center}
\label{table6}
\end{table}

By Nakayama's lemma the ideal $J$ contains a monic polynomial of degree~$1$ if and only if the ideal $(T^2+3T+3,T^k)$ does.
If $J$ is a proper ideal, this happens precisely when $k=1$, in which case $C_1$ is isomorphic to the order $3$ module $\Lambda/(3,T)$. These cases appear in the first column of \Cref{table5} and were computed using PARI/GP. Their ideals $J$ are of the form $(T-a,b)$ with level of stabilization equal to $v_3(b)$.  
In particular, the first entry contains the $824$ discriminants for which $J$ is equal to the ideal~$(3,T)$.
The $119$ entries in the remaining columns of \Cref{table5} were taken care of using Magma's polynomial arithmetic.

\end{document}